\newtheorem{theorem}{Theorem}
\newtheorem{proposition}{Proposition}
\newtheorem{lemma}{Lemma}
\newtheorem{remark}{Remark}
\numberwithin{equation}{section}
\newcommand{\<}{\left\langle}
\renewcommand{\>}{\right\rangle}
\def\eps{\varepsilon}
\def\N{\mathbb N}
\def\R{\mathbb R}
\def\Pb{\mathbb P}
\def\E{\mathbb{E}\,}
\newcommand{\Var}{\mbox{\rm Var}}
\newcommand{\diag}{\mbox{\rm diag}}
\def\indicator{\mathbf{1}}
\def\bP{{\mathbf P}}
\def\bY{{\mathbf Y}}
\def\bH{{\mathbf H}}
\def\bbH{{ \overline { \mathbf H}}}
\newcommand{\bhY}{\overline{\mathbf{Y}}}
\newcommand{\hY}{\overline{{Y}}}
\def\balpha{\boldsymbol\alpha}
\def\bbeta{\boldsymbol\beta}
\def\bgamma{\boldsymbol\gamma}
\def\bj{\boldsymbol j}
\author{Diego Armentano\thanks{CMAT, 
Universidad de la Rep\'{u}blica, Montevideo, Uruguay. 
E-mail: diego@cmat.edu.uy.}
\quad
Jean-Marc Aza\"{i}s\thanks{IMT, UMR CNRS 5219, Universit\'e de Toulouse, 
Email: jean-marc.azais@math.univ-toulouse.fr}
\quad
Federico Dalmao\thanks{
DMEL, 
Universidad de la Rep\'{u}blica, Salto, Uruguay. 
E-mail: fdalmao@unorte.edu.uy.}
\quad
Jos\'e R. Le\'{o}n\thanks{IMERL, Universidad de la Rep\'ublica, Montevideo, Uruguay and 
Escuela de Matem\'{a}tica. Facultad de Ciencias. 
Universidad Central de Venezuela, Caracas, Venezuela. 
E-mail: rlramos@fing.edu.uy}
}
\title{On the asymptotic variance of the number of real roots of random polynomial systems}
\begin{document}
\maketitle

\begin{abstract}
We obtain the asymptotic variance, as the degree goes to infinity, 
of the normalized number of real roots of a square 
Kostlan-Shub-Smale random polynomial system 
of any size. 
Our main tools are the Kac-Rice formula for the second factorial moment of the number of roots 
and a Hermite expansion of this random variable.\\

Keywords: {\em Kostlan-Shub-Smale ramdom polynomials, Kac-Rice formula, Hermite expansion.}\\
AMS subjet classification: Primary: 60F05,  30C15. Secondary: 60G60, 65H10
\end{abstract}

\section{Introduction}
The study of the roots of random polynomials 
is among the most important and popular topics 
in Mathematics and in some areas of Physics. 
For almost a century 
a considerable amount of literature about this problem 
has emerged from 
fields as probability, geometry, algebraic geometry, 
algorithm complexity, quantum physics, etc. 
In spite of its rich history it is still an extremely active field.

There are several reasons that lead to consider random polynomials 
and several ways to randomize them, see Bharucha-Reid and Sambandham 
\cite{brs}. 

The case of algebraic polynomials 
$P_d(t)=\sum^d_{j=1}a_jt^j$ 
with independent identically distributed  coefficients was the first one to be extensively studied 
and was completely understood during the 70s. 
If $a_1$ is centered, $\Pb(a_1=0)=0$ and $\E(|a_1|^{2+\delta})<\infty$ 
for some $\delta>0$, 
then, the asymptotic expectation and 
the asymptotic variance of the number of real roots of $P_d$, 
as the degree $d$ tends to infinity, are of order $\log(d)$ 
and, once normalized, the number of real roots converges in distribution  
towards a centered Gaussian random variable.
See the books by Farahmand \cite{far98} and Bharucha-Reid and Sambandham \cite{brs}
and the references therein for the whole picture.

The case of systems of polynomial equations 
seems to be considerably harder and has received in consequence much less attention. 
The results in this direction are confined to the Shub-Smale model 
and some other invariant distributions.
The ensemble of Shub-Smale random polynomials was introduced 
in the early 90s 
by Kostlan \cite{kostlan}. 
Kostlan argues that this is the most natural distribution 
for a polynomial system.
The exact expectation was obtained in the early 90's by geometric means, 
see Edelman and Kostlan \cite{ek} for the one-dimensional case 
and Shub and Smale \cite{ss} for the multi-dimensional one. 
In 2004, 2005 Aza\"is and Wschebor \cite{aw-pol} and Wschebor \cite{w} 
obtained by probabilistic methods 
the asymptotic variance as the number of equations and 
variables tends to infinity. 
Recently, Dalmao \cite{d} obtained the asymptotic variance and a CLT 
for the number of zeros as the degree $d$ goes to infinity in 
the case of one equation in one variable. 
Letendre in \cite{l2} studied the asymptotic behavior of  the volume of random real algebraic submanifolds.  
His results include the finiteness of the limit variance, when the degree tends to infinity, 
of the volume of the zero sets of Kostlan-Shub-Smale systems with strictly less equations than variables.
Some results for the expectation and variance of related models 
are included in \cite{aw-pol,ll,l}.

In the present paper we prove that, as the degree goes to infinity, the asymptotic variance of the normalized number of real roots of a 
Kostlan-Shub-Smale square random system with $m$ equations and $m$ variables 
exists in $(0,\infty)$. 
We use 
Rice Formulas \cite{aw} to show the finiteness of the limit variance and 
Hermite expansions as in Kratz and Le\'on \cite{kl-97} to show that it is strictly positive. 
Furthermore, we strongly exploit  the invariance under isometries 
of the distribution of the polynomials. 

The reader may wonder, in view of the results mentioned above, if the normalized number of roots satisfies a CLT when the degree of the system tends to infinity. 
The answer is affirmative if $m = 1$ \cite{d} but for the time being we cannot give an answer to this question for $m>1$. 
The ingredients to prove a CLT for a non linear functional of a Gaussian process are: 
a) to write a representation in the It\^o-Wiener chaos of the normalized functional; 
b) to demonstrate that each component verifies a CLT (Fourth Moment Theorem \cite{np}, \cite{pta}) 
and 
if the functional has an expansion involving infinitely many terms: 
c) to prove that the tail of the asymptotic variance tends uniformly (w.r.t. $d$) to zero. 
In the present case we lack a proof of c). 
For $m =  1$ the fact that the invariance by rotations is equivalent with the stationarity 
allows to build a proof similar to the one made for the number of crossings of a stationary Gaussian process. 

The rest of the paper is organized as follows. Section 2 
sets the problem and presents the main result. 
Section 3 deals with the proof and Section 4 presents some auxiliary results 
as well as the explicit form of the asymptotic variance. 

\section{Main Result}
Consider a square system $\bP$ of $m$ polynomial equations in $m$ variables 
with common degree $d>1$. 
More precisely, 
let $\bP=(P_1,\dots,P_m)$ with 
\begin{equation*}
  P_{\ell}(t)=\sum_{|\bj|\leq d}a^{(\ell)}_{\bj}t^{\bj},
\end{equation*}
where 
\begin{enumerate}
  \item $\bj=(j_{1},\dots,j_{m})\in\N^{m}$ 
and $|\bj|=\sum^{m}_{k=1}j_{k}$; 
  \item $a^{(\ell)}_{\bj}=a^{(\ell)}_{j_1\dots j_m}\in\R$, $\ell=1,\dots,m$, $|\bj|\leq d$;
  \item $t=(t_{1},\dots,t_{m})$ 
    and $t^{\bj}=\prod^m_{k=1} t^{j_k}_{k}$.
\end{enumerate}

We say that  $\bP$ has the Kostlan-Shub-Smale 
(KSS for short) distribution if the coefficients $a^{(\ell)}_{\bj}$ 
are independent centered normally distributed random variables with variances
\begin{equation*}
  \Var\left(a^{(\ell)}_{\bj}\right)=\binom{d}{\bj}=\frac{d!}{ 
j_1!\dots j_m!(d-|\bj|)!}. 
\end{equation*}

We are interested in the number of real roots of $\bP$ 
that we denote by $N^{\bP}_d$. Shub and Smale \cite{ss} proved that $\E(N^{\bP}_d)=d^{m/2}$. 
Our main result is the following.
\begin{theorem}\label{teo}
Let $\bP$ be a KSS random polynomial system with $m$ equations, $m$ variables and degree $d$. 
Then, as $d\to\infty$ we have
$$
  \lim_{d\to\infty}\frac{\Var(N^{\bP}_d)}{d^{m/2}}=V^2_\infty,
$$
where $0<V^2_\infty<\infty$.
\end{theorem}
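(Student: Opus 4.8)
The plan is to split the statement into its two halves — finiteness ($V_\infty^2<\infty$) and strict positivity ($V_\infty^2>0$) — and to attack them with the two different tools announced in the introduction. Throughout I work with the normalized field $\bY(t)=\bP(t)/\sqrt{(1+|t|^2)^d}$ (coordinatewise), whose covariance between two coordinates evaluated at $s$ and $t$ is the isotropic kernel $r_d(s,t)=\big((1+\<s,t\>)/\sqrt{(1+|s|^2)(1+|t|^2)}\big)^d$; this is the cosine of the angle between the lifts of $s,t$ to $S^m\subset\R^{m+1}$, raised to the power $d$, so the KSS law is invariant under the isometries of the sphere, the property I exploit repeatedly. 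First I would write the variance through the Kac-Rice formula for the second factorial moment,
\[
\E\big(N^{\bP}_d(N^{\bP}_d-1)\big)=\int_{\R^m}\!\!\int_{\R^m} A_d(s,t)\,ds\,dt,
\]
with $A_d(s,t)=\E\big(|\det \bY'(s)|\,|\det \bY'(t)|\,\big|\,\bY(s)=\bY(t)=0\big)\,p_{\bY(s),\bY(t)}(0,0)$, and combine it with the identity $\Var(N^{\bP}_d)=\E\big(N^{\bP}_d(N^{\bP}_d-1)\big)+\E(N^{\bP}_d)-(\E N^{\bP}_d)^2$ together with the known value $\E N^{\bP}_d=d^{m/2}$. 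By invariance $A_d(s,t)$ depends on $(s,t)$ only through the correlation $r_d(s,t)$, so fixing one point and integrating out the other reduces the double integral to a single integral in the relative variable and pulls out the factor $d^{m/2}$.

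For finiteness I would change variables near the diagonal by $t=s+u/\sqrt d$, under which $r_d\to e^{-|u|^2/2}$ and the full Kac-Rice density converges to the analogous density of a fixed translation-invariant Gaussian limit field; the contribution of this region is the candidate value of $V_\infty^2$, provided the integrand is dominated by a function integrable in $u$ uniformly in $d$. The delicate point is the interplay between the diagonal singularity and the tail: as $s\to t$ the joint density $p_{\bY(s),\bY(t)}(0,0)$ blows up while the conditional expectation of the two Jacobians degenerates, and the product must be shown integrable after rescaling; while for points far apart one must check that $A_d(s,t)$ factorizes into the product of the first-moment densities up to an error whose integral cancels the divergent $(\E N^{\bP}_d)^2=d^{m}$ against $\E\big(N^{\bP}_d(N^{\bP}_d-1)\big)$, leaving an $O(d^{m/2})$ remainder. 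This simultaneous domination-and-cancellation estimate, uniform in $d$, is where I expect the main difficulty; it is exactly the kind of bound the Rice formula is designed to furnish, and it both gives $V_\infty^2<\infty$ and produces the explicit limit recorded in Section 4.

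For strict positivity I would instead use the Hermite/chaos expansion in the spirit of Kratz and Le\'on. Writing $N^{\bP}_d$ formally as $\int \delta_0(\bY(t))\,|\det \bY'(t)|\,dt$ and expanding the integrand in Hermite polynomials of the Gaussian vector $(\bY(t),\bY'(t))$ yields an orthogonal decomposition $N^{\bP}_d-\E N^{\bP}_d=\sum_{q\geq 1} I_{q,d}$ into Wiener chaoses, whence $\Var(N^{\bP}_d)=\sum_q \Var(I_{q,d})\geq \Var(I_{q_0,d})$ for any single order $q_0$. Since the functional $(\bY,\bY')\mapsto \delta_0(\bY)\,|\det \bY'|$ is even (both $\delta_0$ and $|\det(-\,\cdot\,)|$ are invariant under a global sign change) the odd chaoses vanish, so I would select $q_0=2$, compute $\Var(I_{2,d})$ explicitly from the second Hermite coefficients and the kernel $r_d$, and show that $\lim_{d\to\infty}\Var(I_{2,d})/d^{m/2}$ is a strictly positive constant, being a convergent integral of a manifestly nonnegative, non-identically-zero integrand. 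This gives $V_\infty^2>0$ and, together with the diagonal analysis above, pins down $V_\infty^2$ as the finite positive limit claimed.
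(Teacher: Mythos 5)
Your proposal follows essentially the same route as the paper: the Kac--Rice formula for the second factorial moment, reduction by invariance under isometries to a one-dimensional integral, rescaling by $\sqrt{d}$ with a simultaneous domination-and-cancellation argument for finiteness, and a Hermite chaos expansion with a second-chaos lower bound (odd chaoses vanishing by symmetry) for positivity; the only cosmetic difference is that you work in an affine chart with the normalized kernel rather than with the homogenized field on $S^m$. The steps you flag as the main difficulties are precisely the ones the paper carries out --- uniform-in-$d$ exponential bounds on the covariance entries together with a Lipschitz bound on the conditional-expectation factor $G$ for the domination, and, for positivity, the orthogonality of the second-chaos summands, Mehler's formula, and the strict positivity of the Hermite coefficient $\tilde f_{\ell 22}$ --- so your plan is correct in outline and matches the published proof.
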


\subsection{Explicit expression of the variance}\label{s:ex}
Using the method of section 12.1.2 of \cite{aw} 
an explicit expression for the limit variance can be given. 

For $k=1,\dots,m$ let $\xi_k,\eta_k$ be independent standard normal random vectors on $\R^k$. 
Let us define
\begin{itemize}
\item $\bar{\sigma}^2(t)=1-\frac{t^2\exp(-t^2)}{1-\exp(-t^2)}$;
\item $\bar{\rho}(t)=\frac{(1-t^2-\exp(-t^2))\exp(-t^2/2)}{1-(1+t^2)\exp(-t^2)}$;
\item $m_{k,j} = \E\left(\|\xi_k\|^j\right)=2^{j/2}\frac{\Gamma((j+k)/2)}{
\Gamma(k/2)},$
where 
$\|\cdot\|$ is the Euclidean norm on $\R^k$;
\item   for $k=1,\ldots,m-1$,  $M_k(t)=\E\left[  
\|\xi_k\|\,\|\eta_k+\frac{e^{-t^2/2}}{(1-e^{-t^2})^{1/2}}\xi_k\|\right]$; 
\item for $k=m$, 
$ M_m(t)=  
\E\left[\|\xi_m\|\, \|\eta_m+\frac{\bar{\rho}(t)}{(1-\bar{\rho}^2(t))^{1/2}}
\xi_m\|\right].$
\end{itemize}

\begin{theorem}\label{teo:var}
We have
  \begin{equation*}
    V^2_\infty
    =\frac12+\frac{\kappa_m \kappa_{m-1}}{2(2\pi)^m}
    \cdot\int^\infty_0
    t^{m-1}\left[\frac{\bar{\sigma}^4(t)(1-\bar{\rho}^2(t))}{1-e^{-t^2}}\right]^{1/2}
    \left[\prod^m_{k=1}M_k(t)-\prod^m_{k=1}m^2_{k,1}\right]dt.
  \end{equation*}
\end{theorem}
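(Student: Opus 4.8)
The plan is to follow the method of Section~12.1.2 of \cite{aw}: express $\Var(N^{\bP}_d)$ through the Kac-Rice formula for the second factorial moment together with the first-moment term, and then pass to the limit in the normalized integrals. Writing
\[
\Var(N^{\bP}_d)=\E\big[N^{\bP}_d(N^{\bP}_d-1)\big]+\E\,N^{\bP}_d-\big(\E\,N^{\bP}_d\big)^2,
\]
I would pass to the unit sphere $S^m$ via the usual homogenization, where the normalized KSS field $X$ is isotropic and the affine count is recovered through the antipodal relation $N_{S^m}=2N^{\bP}_d$. The second factorial moment then becomes a Kac-Rice double integral over $S^m\times S^m$ of
\[
A_d(x,y)=\E\big[|\det X'(x)|\,|\det X'(y)|\,\big|\,X(x)=X(y)=0\big]\,p_{X(x),X(y)}(0,0),
\]
while $(\E\,N_{S^m})^2$ is the double integral of the product of the one-point factors. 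After dividing by $d^{m/2}$ and using $\Var(N^{\bP}_d)=\tfrac14\Var(N_{S^m})$, the first-moment term $\E\,N_{S^m}=2d^{m/2}$ contributes the additive constant $\tfrac12$, and the remaining task is to evaluate the limit of the normalized difference of the two double integrals.

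Next I would exploit the invariance under the orthogonal group: every integrand depends only on the geodesic distance between the two points, so fixing the first point and integrating out the direction of the second collapses the $2m$-dimensional integral to a one-dimensional integral in the geodesic angle $\theta$, weighted by $\sin^{m-1}\theta$ and the volume factors that produce $\kappa_m\kappa_{m-1}$. The KSS correlation at angle $\theta$ equals $(\cos\theta)^d$, whose non-trivial range is $\theta\sim d^{-1/2}$; substituting $\theta=t/\sqrt d$ turns $(\cos\theta)^d$ into $e^{-t^2/2}$ and $\sin^{m-1}\theta\,d\theta$ into $t^{m-1}d^{-m/2}\,dt$. The factor $d^{-m/2}$ cancels the $d^{m/2}$-growth of the integrand (each determinant carries a factor $d^{m/2}$ from the rescaled derivatives), leaving the finite integral with weight $t^{m-1}$.

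It then remains to identify the limiting integrand. In the rescaled limit the pair $(X,X')$ converges to a stationary isotropic Gaussian field $Z$ on $\R^m$ with covariance $e^{-|u-v|^2/2}$; conditioning on the two values being $0$, a direct Gaussian regression gives the longitudinal conditional variance $\bar\sigma^2(t)$, the longitudinal conditional correlation $\bar\rho(t)$, and transverse conditional variance $1$ with transverse correlation $e^{-t^2/2}$. Collecting the joint density $(2\pi)^{-m}(1-e^{-t^2})^{-m/2}$, the two longitudinal column scalings giving $[\bar\sigma^4(t)]^{1/2}$, and the correlation normalizations $(1-\bar\rho^2(t))^{1/2}$ (longitudinal) and $(1-e^{-t^2})^{(m-1)/2}$ (the $m-1$ transverse ones) yields exactly the prefactor $(2\pi)^{-m}[\bar\sigma^4(t)(1-\bar\rho^2(t))/(1-e^{-t^2})]^{1/2}$. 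The heart of the computation is the evaluation of $\E[|\det X'(x)||\det X'(y)|]$ under this conditional law: since the $m$ field components are independent, $X'$ has independent rows, and in coordinates adapted to the geodesic its columns split into one longitudinal direction (correlation $\bar\rho$) and $m-1$ transverse directions (correlation $e^{-t^2/2}$). Processing these columns by Gram-Schmidt, so that the residual at the $k$-th step lives in dimension $k$, factorizes the joint expectation into $\prod_{k=1}^m M_k(t)$, with the longitudinal direction processed first to produce the $k=m$ factor built from $\bar\rho(t)$ and the transverse directions producing the factors $k=1,\dots,m-1$ built from $e^{-t^2/2}/(1-e^{-t^2})^{1/2}$. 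The same Gram-Schmidt identity applied to an independent pair gives $\big(\E|\det G|\big)^2=\prod_k m^2_{k,1}$, so the subtracted $(\E\,N_{S^m})^2$ term produces precisely $\prod_k m^2_{k,1}$ inside the bracket; the volume constants and the factor $\tfrac14$ then assemble the stated $\tfrac12$ prefactors.

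The main obstacle is twofold. First, the interchange of limit and integral requires a domination of the rescaled Kac-Rice integrand uniform in $d$; this is exactly the finiteness content already secured in Theorem~\ref{teo}, on which I would lean. Second, and more delicate, is the exact factorization of the two-point determinant expectation into $\prod_{k=1}^m M_k(t)$: one must diagonalize the limiting conditional covariance, separate the distinguished longitudinal direction from the $(m-1)$-dimensional transverse block, and verify the independence across Gram-Schmidt steps that legitimizes the product. I expect this bookkeeping---tracking which conditional correlation governs which dimension-$k$ factor and confirming that the successive residual pairs reduce to the vectors $\xi_k,\eta_k$ on $\R^k$---to be the most error-prone part, while the remaining identities for $m_{k,j}$ and the constants $\kappa_m,\kappa_{m-1}$ are routine.
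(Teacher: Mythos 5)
Your scaffolding---Kac--Rice for the second factorial moment, the additive $\tfrac12$ coming from $\E N^{\bY}_d=2d^{m/2}$, the reduction by invariance to a one-dimensional integral (Lemma \ref{lem:intACV}), the scaling $\psi=t/\sqrt d$, and the Gaussian regression giving the conditional covariance \eqref{e:jm:b}---coincides with the machinery of Section \ref{s:var}, which is in fact all the paper itself offers (Theorem \ref{teo:var} is stated with only a pointer to Section 12.1.2 of \cite{aw}). The genuine gap is the step you yourself flag as ``delicate'': the \emph{exact} factorization of the conditional two-point determinant expectation into $\prod_{k=1}^m M_k(t)$ via Gram--Schmidt with ``independence across steps''. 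That mechanism fails. When you orthogonalize the columns of the two coupled Jacobians, the $k$-th residual of $\bY'(s)$ is the projection onto the orthocomplement of the \emph{previous columns of} $\bY'(s)$, while the paired residual of $\bY'(t)$ is projected onto the orthocomplement of the \emph{previous columns of} $\bY'(t)$; these are two \emph{different} random subspaces, so the residual pair is not a standard $k$-dimensional pair carrying the original scalar correlation, and the successive factors are neither independent nor of the claimed form. The failure is quantitative. In the paper's notation your claim amounts to $G(\rho,\mathcal{D})=\E_\rho\left[\|\xi_m\|\|\zeta_m\|\right]\prod_{k=1}^{m-1}\E_{\mathcal{D}}\left[\|\xi_k\|\|\zeta_k\|\right]$, where under $\E_c$ the vectors $\xi_k,\zeta_k$ are standard on $\R^k$ with $\E(\xi_{k,i}\zeta_{k,j})=c\,\delta_{ij}$. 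A Mehler expansion disproves this: for $m=2$ the degree-two Hermite coefficients of $|\det|$ are $f_{2e_{\ell k}}=\tfrac14$ (mixed ones vanish), so $G(c,c)=1+\tfrac12 c^2+O(c^4)$, whereas the product gives $\left(1+\tfrac14c^2\right)\left(1+\tfrac12c^2\right)+O(c^4)=1+\tfrac34c^2+O(c^4)$. (The longitudinal terms do match, because $\sum_\ell 2f_{2e_{\ell 1}}^2=\prod_k m_{k,1}^2/(2m)$; it is the transverse terms, coefficient $(m-1)/(2m)$ versus $\sum_{k=1}^{m-1}1/(2k)$, that disagree for every $m\ge2$.)

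There is a second, related gap in how you place the subtracted term. You assert that the $(\E N_{S^m})^2$ term ``produces precisely $\prod_k m^2_{k,1}$ inside the bracket'', i.e.\ under the common prefactor $\left[\bar{\sigma}^4(1-\bar{\rho}^2)/(1-e^{-t^2})\right]^{1/2}$. But in the Kac--Rice difference the squared expectation enters as the bare constant $G(0,0)=\prod_k m_{k,1}^2$, \emph{without} any two-point prefactor---this is visible in the paper's own display \eqref{m:1}---so your route, carried out correctly, lands on $\frac12+\frac{\kappa_m\kappa_{m-1}}{2(2\pi)^m}\int_0^\infty t^{m-1}\bigl[\bar{\sigma}^2(1-e^{-t^2})^{-m/2}G(\bar{\rho},e^{-t^2/2})-G(0,0)\bigr]\,dt$ (the factor $\tfrac12$ rather than $\tfrac14$ coming from the paper's Step 3 symmetrization, which you also do not mention). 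Matching this to the stated formula would additionally require $\bar{\sigma}^4(t)(1-\bar{\rho}^2(t))=1-e^{-t^2}$, which is not an identity: with $u=e^{-t^2}$ one computes $\bar{\sigma}^4(1-\bar{\rho}^2)=(1-u)-t^4u/(1-u)$. Since the paper supplies no proof of Theorem \ref{teo:var}, I cannot point you to the intended resolution of either discrepancy; what I can say is that the two hard steps---the product factorization and the placement of $\prod_k m_{k,1}^2$ under the prefactor---are exactly where the proof must live, and the naive versions of both, which is all your proposal contains, do not close.
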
\qed


\section{Proof}
\subsection{Preliminaries}
It is customary and convenient to homogenize the polynomials. 
That is, to add an auxiliary variable $t_0$ 
and to multiply the monomial in $P_\ell$ 
corresponding to the index $\bj$ by $t^{d-|\bj|}_0$. 
Let $\bY=(Y_{1},\dots,Y_{m})$ denote the resulting vector of $m$ 
homogeneous polynomials in $m+1$ real variables 
with common degree $d>1$. 
We have,
\begin{equation*}
  Y_{\ell}(t)=\sum_{|\bj|=d}a^{(\ell)}_{\bj}t^{\bj},\quad \ell=1,\dots, m,
\end{equation*}
where this time  
$\bj=(j_{0},\dots,j_{m})\in\N^{m+1}$; 
$|\bj|=\sum^{m}_{k=0}j_{k}$; 
$a^{(\ell)}_{\bj}=a^{(\ell)}_{j_0\dots j_m}\in\R$; 
$t=(t_{0},\dots,t_{m})\in\R^{m+1}$ 
and 
$t^{\bj}=\prod^m_{k=0} t^{j_k}_{k}$.  \medskip

Since $\bY$ is homogeneous, its roots consist of lines through $0$ in 
$\R^{m+1}$. Then, it is easy to check that each root of $\bP$ corresponds 
exactly to two (opposite) roots of $\bY$ on the unit sphere $S^m$ of 
$\R^{m+1}$. 
{Furthermore, one can prove that the subset of homogeneous polynomials $\bY$ 
with roots lying in the hyperplane $t_0=0$ has Lebesgue measure zero. 
Then, denoting by 
$N^{\bY}_d$ the number of roots of $\bY$ on $S^m$, 
we have $N^{\bP}_d=N^{\bY}_d/2$ almost surely.} 

From now on we work with the homogenized version $\bY$. 
The standard 
multinomial formula shows that for all $s,t\in\R^{m+1}$ we have
\begin{equation*}
r_d(s,t)
:=\E({Y_{\ell}(s)Y_{\ell}(t)})
=\<s,t\>^{d},
\end{equation*} 
where $\<\cdot,\cdot\>$ is the usual inner product in $\R^{m+1}$. 
As a consequence, we see that 
the distribution of the system $\bY$ is invariant under the action of the 
orthogonal group in $\R^{m+1}$. 
For the ease of notation we omit the dependence on $d$ of $\bY$.

\medskip

In the sequel we need to consider the derivative of $Y_\ell$, $\ell=1,\dots,m$. 
Since the parameter space is the sphere $S^m$, 
the derivative is taken in the sense of the sphere, 
that is, the spherical derivative $Y'_\ell(t)$ of $Y_\ell(t)$ is the 
orthogonal projection of the free gradient on 
the tangent space $t^\bot$ of $S^m$ at $t$. 
The $k$-th component of $Y'_\ell(t)$ at a given basis of the tangent space 
is denoted by $Y'_{\ell k}(t)$. 

The covariances between the derivatives and between 
the derivatives and the process are obtained via routine computations 
from the covariance of $Y_\ell$.
In particular, the invariance under isometries is preserved after derivation 
and for each $t\in S^m$, $\bY(t)$ is independent from
${\bY}'(t)=(Y'_1(t),\ldots,Y'_m(t))$.

\subsection{Finiteness of the limit variance}\label{s:var}
In this section we prove that 
$$
\lim_{d\to\infty}\frac{\Var(N^{\bP}_d)}{d^{m/2}}<\infty.
$$
Recall that $\E(N^{\bP}_d)=d^{m/2}$, we write 
\begin{equation}\label{Rice:second}
  \Var\big(N^{\bP}_d\big)
  =\Var\left(\frac{N^{\bY}_d}{2}\right)
  =\frac14\big[\E\big(N^{\bY}_d\big(N^{\bY}_d-1\big)\big)-\E^2\big(N^{\bY}_d\big)\big]+\frac{d^{m/2}}{2}.
\end{equation}
The quantity $\E(N^{\bY}_d(N^{\bY}_d-1))$ is computed using Rice formula \cite[Th. 6.3]{aw} and a localisation argument. 
\begin{multline*}
\E(N^\bY_d(N^\bY_d-1))
=
\int_{(S^m)^2}\E[|\det \bY'(s)\det \bY'(t)|\,|\bY(s)=\bY(t)=0]\\
\cdot p_{\bY(s),\bY(t)}(0,0)dsdt.
\end{multline*}
Here $ds$ and $dt$ are the $m$-geometric measure  on  $ S^m$ 
but  we will use in other parts $ds$ and $dt$ for the Lebesgue measure. 

The following Lemma allows us to reduce this integral 
to a one-dimensional one. 
The proof is a direct consequence of the co-area formula.
\begin{lemma}\label{lem:intACV}
Let ${\mathcal H}$ be a measurable function defined on $\R$. 
Then, we have 
\begin{multline*} 
\int_{(S^m)^2} {\mathcal H}( \langle s,t \rangle )\,ds\,dt 
= \kappa_m\kappa_{m-1}\int_0^{\pi}\sin(\psi)^{m-1} 
 {\mathcal H}(\cos(\psi))\, d\psi \\
=\frac{\kappa_m\kappa_{m-1}}{\sqrt{d}}\int_0^{ \sqrt d \pi} 
\sin\left(\frac{z}{\sqrt d}\right)^{ m-1 } 
\mathcal {\mathcal H}\left(\cos\left(\frac{z}{\sqrt d}\right)\right)\, dz,
\end{multline*}
where $\kappa_m$ is the $m$-geometric measure of $S^m$.
\qed
\end{lemma}


Let $\{e_0, e_1,\dots, e_m\}$ be the canonical basis of $\R^{m+1}$. 
Because of the invariance of $\bY$ by isometries we can assume without loss of generality that  
\begin{equation}\label{eq:st}
s = e_0, \quad t = \cos(\psi) e_0 +\sin(\psi) e_1. 
\end{equation}
For $s^\bot$ we choose as basis $\{e_1,\ldots,e_m\}$ and 
$\{ \sin(\psi) e_0 - \cos(\psi) e_1, e_2,\dots,e_m\}$ for  $t^\bot$. 
Finally, take $\psi= z/\sqrt{d}$ and use Lemma \ref{lem:intACV}. Hence,
\begin{multline*}
d^{-m/2}\E(N^{\bY}_d(N^{\bY}_d-1))\\
=\frac{\kappa_m\kappa_{m-1}}{(2\pi)^m\sqrt d}
\int_0^{\sqrt d\pi}\sin^{m-1}\left(\frac z{\sqrt d}\right)
\frac{d^{m/2}}{\left(1-\cos^{2d}(\frac z{\sqrt d})\right)^{m/2}}{{\mathcal E}\left(\frac z{\sqrt d}\right)}dz,
\end{multline*}
where ${\mathcal E}(z/\sqrt{d})$
is the conditional expectation written for $s,t$ as in \eqref{eq:st}. 

%
Now, we deal with the conditional expectation ${\mathcal E}(z/\sqrt{d})$.  
Introduce the following notation 
\begin{eqnarray}\label{eq:cs}
\mathcal{A}\left(\frac z{\sqrt d}\right)&=&-\sqrt d \cos^{d-1}\left(\frac z{\sqrt d}\right)\sin\left(\frac z{\sqrt d}\right); \\
\mathcal{B}\left(\frac z{\sqrt d}\right)&=&\cos^d\left(\frac z{\sqrt d}\right)
-(d-1)\cos^{d-2}\left(\frac z{\sqrt d}\right)\sin^2\left(\frac z{\sqrt d}\right);\notag\\
\mathcal{C}\left(\frac z{\sqrt d}\right)&=&\cos^d\left(\frac z{\sqrt d}\right);\notag\\
\mathcal{D}\left(\frac z{\sqrt d}\right)&=&\cos^{d-1}\left(\frac z{\sqrt d}\right);\notag
\end{eqnarray}
and -omitting the $(z/\sqrt{d})$-
\begin{equation*}
\sigma^2=1-\frac{\mathcal{A}^2}{1-\mathcal{C}^2},\quad
\rho  = \frac{\mathcal{B}(1-\mathcal{C}^2) -\mathcal{A}^2\mathcal{C}}{1-\mathcal{C}^2-\mathcal{A}^2}.
\end{equation*}
Thus, the variance-covariance matrix of the vector 
$\left(Y_\ell(s),Y_\ell(t),\frac{Y'_\ell(s)}{\sqrt d},\frac{Y'_\ell(t)}{\sqrt d}\right)$ 
at the given basis, can be written in the following form
\begin{eqnarray}\label{eq:c1}
\left[\begin{array}{c|c|c}
A_{11}&A_{12} &A_{13}\\ \hline
A_{12}^\top&I_m\,\,&\,A_{23}\\ \hline
A_{13}^\top&A_{23}^\top\,\,&I_m\\
\end{array}\right],
\end{eqnarray}
where $I_m$ is the $m\times m$ identity matrix, 
\begin{equation}\label{eq:c2}
A_{11}=\left[\begin{array}{cc}
1&\mathcal{C}\\
\mathcal{C}&1
\end{array}\right], \;
A_{12}= \left[\begin{array}{cccc}
0& 0&\cdots& 0\\
-\mathcal{A}&0& \cdots & 0
\end{array}\right],\:
A_{13}=  \left[\begin{array}{cccc}
\mathcal{A}& 0&\cdots& 0\\
0&0& \cdots & 0
\end{array}\right],
\end{equation}
and $A_{23}$ is the $m\times m$ diagonal matrix $\diag( \mathcal{B},\mathcal{D},\ldots,\mathcal{D}).$

Gaussian regression formulas (see \cite[Proposition 1.2]{aw}) imply that the conditional distribution of the vector 
$\big({\frac{Y'_\ell(s)}{\sqrt{d}},\frac{Y'_\ell(t)}{\sqrt{d}}}\big)$ 
(conditioned on $\bY(s)=\bY(t)=0$) 
is centered normal with variance-covariance matrix given by
\begin{equation}  \label{e:jm:b}
\left[\begin{array}{c|c}
B_{11}&B_{12} \\ \hline
B_{12}^\top&B_{22} \\
\end{array}\right],
\end{equation}
with $B_{11}=B_{22}= \diag( \sigma^2, 1,\dots,1)$ and 
$B_{12}= \diag( \sigma^2 \rho, \mathcal{D},\dots,\mathcal{D})$.

It is important to remark that if $A=(A_1\,A_2\ldots A_m)$ is a matrix with columns vectors $A_j$, 
it holds that  $\det( A)=Q_m(A_1,A_2,\dots,A_m)$ for a certain polynomial $Q_m$ of degree $m$  from $\R^{m^2}$ to $\R$.
Using representation of Gaussian vectors from a standard one  we can write 
\begin{multline*}
{{\mathcal E}\left(\frac z{\sqrt d}\right)}
=\int_{(\R^{m^2})^2}\phi_{m^2}(\mathbf x)  \phi_{m^2}(\mathbf y) 
\left| Q_m\left(\left(\begin{array}{c}
 \sigma x_{11}\\
x_{12}\\
\cdot\\
x_{1m}\end{array}\right),
\ldots,
\left(\begin{array}{c} \sigma x_{m1}\\
x_{m2}\\
\cdot\\
x_{mm}\end{array}\right) \right) \right|
\\
\left| Q_m\left(\left(\begin{array}{c}
\sigma(  \rho x_{11}+\sqrt{1-\rho^2}y_{11})  \\
\mathcal{D} x_{12}+\sqrt{1-\mathcal{D}^2}y_{12}\\
\cdot\\
\mathcal{D} x_{1m}+\sqrt{1-\mathcal{D}^2}y_{1m}\end{array}\right),
\ldots,  
\left(\begin{array}{c}
\sigma(  \rho x_{m1}+\sqrt{1-\rho^2}y_{m1}) \\
\mathcal{D} x_{m2}+\sqrt{1-\mathcal{D}^2}y_{m2}\\
\cdot\\
\mathcal{D} x_{mm}+\sqrt{1-\mathcal{D}^2}y_{mm}\end{array} \right)   
\right) \right|    d\mathbf x d\mathbf y,
\end{multline*}
where $\phi_{m^2}$ is the standard normal density in $\R^{m^2}$. 
Because of the homogeneity of the determinant we have
\begin{equation*}
{{\mathcal E}\left(\frac z{\sqrt d}\right)}
=  \sigma^2\int_{(\R^{m^2})^2}Q_m ( \mathbf{x})   Q_m ( \mathbf{z})  
\phi_{m^2}(\mathbf{x}) \phi_{m^2}(\mathbf{y})  d\mathbf x d\mathbf y
 =: \sigma^2 G(\rho,\mathcal{D}),
\end{equation*}
where $\mathbf{z}=\diag(\rho,\mathcal{D},\dots,\mathcal{D}){\mathbf{x}}+\diag(\sqrt{1-\rho^2},\sqrt{1-\mathcal{D}^2},\dots,\sqrt{1-\mathcal{D}^2}){\mathbf{y}}$.

Now, we return to the expression of the variance  in \eqref{Rice:second}. 
We have
\begin{multline}\label{m:1}
 d^{-m/2}  \Var\left(N^{\bP}_d\right)
 =\frac{1}{4 d^{m/2}}  \big[\E(N^{\bY}_d(N^{\bY}_d-1))- (\E(N^{\bY}_d))^2\big] +   \frac{1}{2}\\
=
\frac12+\frac{\kappa_m\kappa_{m-1}}{4(2\pi)^m}
\int_0^{\sqrt d\pi}\sin^{m-1}\left(\frac z{\sqrt d}\right)d^{(m-1)/2}
\\\bigg[ \frac{ \sigma^2(\frac z{\sqrt d}) } {(1-\cos^{2d}(\frac z{\sqrt d}))^{m/2}} 
G\Big(\rho\Big(\frac z{\sqrt d}\Big), \mathcal{D}\Big(\frac z{\sqrt d}\Big)\Big)- G(0,0) \bigg]dz.
\end{multline}
The proof of the convergence  of this integral is done in several steps. \medskip

In the rest of this section $\mathbf{C}$ denotes an unimportant constant, its value can change from one occurrence to another. It can depend on $m$, but recall that $m$ is fixed. \medskip

\noindent {\it Step 1:} Bounds for $G$. 

\begin{itemize}
 \item  $G(\rho,\mathcal{D})  = \int_{(\R^{m^2})^2}Q_m ( \mathbf{x})   Q_m ( \mathbf{z})  
\phi_{m^2}(\mathbf{x}) \phi_{m^2}(\mathbf{y})  d\mathbf x d\mathbf y$;
\item $G(0,0)  = \int_{(\R^{m^2})^2}Q_m ( \mathbf{x})   Q_m ( \mathbf{y})  
\phi_{m^2}(\mathbf{x}) \phi_{m^2}(\mathbf{y})  d\mathbf x d\mathbf y$;
\item $  | \sqrt{1-\rho^2} - 1| \leq  \mathbf{C} |\rho|$;  $  | \sqrt{1-\mathcal{D}^2} - 1| \leq  \mathbf{C} |\mathcal{D}|$;
\item $| Q_m( \mathbf{x})| \leq \mathbf{C} ( 1+ \|\mathbf{x}\|_{\infty})^m$;
\item any partial derivative  of $Q_m (\mathbf{w}) $ is a polynomial  of degree $m-1$  and thus it is bounded by
$\mathbf{C} ( 1+ \|\mathbf{w}\|_{\infty})^{m-1}$.
\end{itemize}
Applying that to a point between $ \mathbf{y}$ and $\mathbf{z}$, we get 
\begin{multline*}
|   Q_m ( \mathbf{z}) - Q_m ( \mathbf{y}) | \leq  \mathbf{C} (  1 + \|\mathbf{y}\|_{\infty} + \|\mathbf{z}\|_{\infty}
 )^{m-1} ( |\rho| +|\mathcal{D}|) \\
  \leq  \mathbf{C} (  1 + \|\mathbf{x}\|_{\infty} + \|\mathbf{y}\|_{\infty}
 )^{m-1} ( |\rho| +|\mathcal{D}|) ,
\end{multline*}
and
\begin{multline*}
|  Q_m ( \mathbf{x})  \cdot Q_m ( \mathbf{z}) -Q_m ( \mathbf{x})  \cdot Q_m ( \mathbf{y}) |
\\ \leq  \mathbf{C} (  1+ \|\mathbf{x}\|_{\infty})^m
( 1+   \|\mathbf{x}\|_{\infty}+\|\mathbf{y}\|_{\infty})^{m-1} ( |\rho| +|\mathcal{D}|) .
\end{multline*}
The finiteness of  all the moments of the supremum of Gaussian random variables finally yields
\begin{equation*}
|G(\rho,\mathcal{D})-G(0,0)|\le \mathbf C (|\rho |+|\mathcal{D}|).
\end{equation*}

\noindent {\it Step 2:}  Point-wise convergence.  It is a direct consequence of the expansions of  sine and cosine functions.
As $d$ tends to infinity: 
\begin{itemize}
    \item $ \mathcal{A}(\frac z{\sqrt d}) \to -z  \exp(-z^2/2)$;
    \item $ \mathcal{B}(\frac z{\sqrt d}) \to (1-z^2) \exp(-z^2/2)$;
    \item $\mathcal{C}(\frac z{\sqrt d})$ and $\mathcal{D}(\frac z{\sqrt d})$ tend to $ \exp(-z^2/2)$;
    \item $\sigma^2(\frac z{\sqrt d}) \to \frac{1-(1+z^2) \exp(-z^2)}{1- \exp(-z^2)} =\bar{\sigma}^2(z)$;
   \item  $\rho(\frac z{\sqrt d}) 
   \to \frac{(1-t^2-\exp(-t^2))\exp(-t^2/2)}{1-(1+t^2)\exp(-t^2)} =\bar{\rho}(z)$;
\end{itemize}
being $\bar{\sigma}^2$ and $\bar{\rho}$ as in Subsection \ref{s:ex}. 
This, in view of the continuity of the function $G$,  implies the point-wise convergence of the integrand in \eqref{m:1}.  \medskip

\noindent  {\it Step 3:} Symmetrization. 
We have
$\mathcal{A}(\pi-z/\sqrt{d})=(-1)^{d-1}\mathcal{A}(z/\sqrt{d})$, 
$\mathcal{B}(\pi-z/\sqrt{d}) =(-1)^{d}\mathcal{B}(z/\sqrt{d})$, 
$\mathcal{C}(\pi-z/\sqrt{d}) =(-1)^{d}\mathcal{C}(z/\sqrt{d})$, 
$\mathcal{D}(\pi-z/\sqrt{d}) =(-1)^{d-1}\mathcal{D}(z/\sqrt{d})$, 
$\sigma^2(\pi-z\sqrt{d})=\sigma^2(z/\sqrt{d})$ and 
$\rho(\pi-z\sqrt{d})=(-1)^{d}\rho(z/\sqrt{d})$. 
Hence, $B_{12}(\pi-z/\sqrt{d})$ in \eqref{e:jm:b} becomes 
$$
\big((-1)^d\sigma^2(z/\sqrt{d}) \rho(z/\sqrt{d}),(-1)^{d-1} \mathcal D(z/\sqrt{d}), \ldots,(-1)^{d-1} \mathcal D(z/\sqrt{d}) \big),
$$
the rest being unchanged.  This corresponds, for example to performing some change of signs 
(depending on the parity of $d$) on the coordinates  of $ Y'_\ell(t)$. 
Gathering the different $\ell$  this may imply a change of sign in $\det (\bY'(t))$ that plays no role because of the absolute value. As  a consequence 
 $$
 \mathcal E(\pi-z/\sqrt{d}) = \mathcal E(z/\sqrt{d}).$$

In conclusion, for the next step it suffices to dominate the integral 
in the r.h.s of \eqref{m:1} restricted to the interval $[0,\sqrt{d}\pi/2]$.\\

\noindent {\it Step 4:} Domination.
The following lemma   gives  bounds for the different terms.
\begin{lemma}  \label{l:liste} 
     There exists  some constant $\alpha$, $0<\alpha \leq 1/2 $  and  some integer $d_0$ such that  for $\frac z{\sqrt d} \leq \frac{\pi}{2}$ and $d>d_0 $: 
     \begin{itemize} 
     \item  
     $ \mathcal{C} \leq \mathcal{D} \leq \cos^{d-2} (\frac z{\sqrt d}) \leq   \exp(-\alpha z^2)$; 
     \item  $ |\mathcal{A}|\leq z \exp(-\alpha z^2)$;
     \item$ |\mathcal{B}| \leq  (1+z^2) \exp(-\alpha z^2)$;
      \item  for $ z\geq z_0$, $  1- \mathcal{C}^2  \geq 1- \mathcal{C}^2 - \mathcal{A}^2\geq \mathbf C > 0$;
   \item      $ 0\leq 1-\sigma^2 \leq   \mathbf C  \exp(-2\alpha z^2)$;
   \item $ |\rho| \leq \mathbf C      (1+z^2)^2 \exp(-2\alpha z^2)$.
\end{itemize}
\end{lemma}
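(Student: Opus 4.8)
The plan is to reduce all six estimates to a single master inequality, namely that $\cos(x)\le e^{-x^2/2}$ for $x\in[0,\pi/2]$. This follows by checking that $g(x):=\log\cos(x)+x^2/2$ satisfies $g(0)=0$ and $g'(x)=x-\tan(x)\le 0$ on $[0,\pi/2)$, so that $g\le 0$. Since $0\le\cos(z/\sqrt d)\le 1$ on the range $z/\sqrt d\le\pi/2$, raising to powers gives the uniform chain $\cos^{d}\le\cos^{d-1}\le\cos^{d-2}(z/\sqrt d)\le\exp(-\beta_d z^2)$, with $\beta_d:=(d-2)/(2d)\nearrow 1/2$. I would then fix once and for all a constant $\alpha\in(0,1/4)$ (the statement permits any $\alpha\le 1/2$, but the target exponent $2\alpha$ in the last two items forces $\alpha$ below $1/4$, since $\sigma^2$ and $\rho$ decay only like $e^{-z^2/2}$), and take $d_0=\lceil 2/(1-4\alpha)\rceil$, so that $\beta_d\ge 2\alpha$ for every $d>d_0$.

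With this in hand the first three items are immediate. The chain $\mathcal C\le\mathcal D\le\cos^{d-2}\le e^{-\alpha z^2}$ is the master bound together with $\cos\le 1$; for $\mathcal A$ one uses in addition $|\sin(z/\sqrt d)|\le z/\sqrt d$, which absorbs the $\sqrt d$ and yields $|\mathcal A|\le z\cos^{d-1}\le z\,e^{-\alpha z^2}$; and for $\mathcal B$ one bounds the two terms separately, $|\mathcal B|\le\cos^d+(d-1)\cos^{d-2}\sin^2\le(1+\tfrac{d-1}{d}z^2)e^{-\alpha z^2}\le(1+z^2)e^{-\alpha z^2}$. Squaring the same estimates also produces $\mathcal A^2\le z^2 e^{-2\beta_d z^2}$ and $\mathcal C^2+\mathcal A^2\le(1+z^2)e^{-2\beta_d z^2}$, on which the remaining items rest.

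For the fourth item I would note that $(1+z^2)e^{-2\beta_d z^2}\le(1+z^2)e^{-4\alpha z^2}\to 0$ as $z\to\infty$, uniformly in $d>d_0$; hence there is a $z_0$ independent of $d$ with $\mathcal C^2+\mathcal A^2\le 1/2$, giving $1-\mathcal C^2-\mathcal A^2\ge 1/2$ (the inequality $1-\mathcal C^2\ge 1-\mathcal C^2-\mathcal A^2$ being trivial). The last two items then split into the tail $z\ge z_0$ and the compact range $0\le z\le z_0$. On the tail I use the lower bound just obtained: since $1-\sigma^2=\mathcal A^2/(1-\mathcal C^2)$ and $1-\mathcal C^2\ge 1/2$, one gets $0\le 1-\sigma^2\le 2z^2 e^{-2\beta_d z^2}\le\mathbf C e^{-2\alpha z^2}$, the polynomial factor being absorbed because $2\beta_d\ge 4\alpha>2\alpha$; likewise, bounding the numerator of $\rho$ by $|\mathcal B|+\mathcal A^2|\mathcal C|\le\mathbf C(1+z^2)e^{-\beta_d z^2}$ and the denominator below by $1/2$ yields $|\rho|\le\mathbf C(1+z^2)e^{-\beta_d z^2}\le\mathbf C(1+z^2)^2 e^{-2\alpha z^2}$, using $\beta_d\ge 2\alpha$. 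On the compact range everything is harmless: $\sigma^2$ and $\rho$ are, respectively, a conditional variance and the conditional correlation of the first gradient components in the regression \eqref{e:jm:b}, so $0\le 1-\sigma^2\le 1$ and $|\rho|\le 1$, while $e^{-2\alpha z^2}\ge e^{-2\alpha z_0^2}$ is bounded below; enlarging $\mathbf C$ by $e^{2\alpha z_0^2}$ makes both bounds valid there.

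The only genuinely delicate point is the matching of exponents: the quantities $\sigma^2$ and $\rho$ carry the natural Gaussian rate $e^{-z^2/2}$, so demanding the factor $e^{-2\alpha z^2}$ is exactly what pins $\alpha$ below $1/4$, and one must also ensure that the threshold $z_0$ past which the denominators $1-\mathcal C^2$ and $1-\mathcal C^2-\mathcal A^2$ stay bounded away from $0$ can be chosen independently of $d$. Both issues are resolved by the uniform-in-$d$ lower bound $\beta_d\ge 2\alpha$ secured through the choice of $d_0$; everything else is the routine bookkeeping of absorbing polynomial prefactors into a slightly weaker exponential.
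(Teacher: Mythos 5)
Your proof is correct and follows essentially the same route as the paper: the paper proves only the first item, via the elementary bound $\cos\psi \le 1-\alpha_1\psi^2$ on $[0,\pi/2]$ followed by $(1-\tfrac{\alpha_1 z^2}{d})^{d-2}\le \exp(-\tfrac{\alpha_1 z^2 (d-2)}{d})$, and declares the remaining items ``similar or easier''. Your master inequality $\cos x\le e^{-x^2/2}$ is a slightly sharper version of the same device, and your explicit treatment of items 4--6 (the uniform-in-$d$ choice of $z_0$, the split into tail and compact range, and the use of the conditional variance/correlation interpretation of $\sigma^2$ and $\rho$ from \eqref{e:jm:b} to handle the compact range) is precisely the bookkeeping the paper leaves to the reader.
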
 
\begin{proof}
We give the proof of 1, the other cases  are similar or easier.  On $[0,\pi/2] $ there exists $ \alpha_1$, $0<\alpha_1< 1/2$ such that 
$$
   \cos(\psi)  \leq 1-\alpha_1  \psi^2.
$$
Thus,  
$$
\cos^{d-2} \bigg(\frac{ z }{\sqrt{d}} \bigg) 
\leq  
\bigg( 1-\frac{\alpha_1z^2}{d}\bigg)^{d-2}  \leq \exp \bigg( -\frac{ \alpha_1  z ^2(d-2)}{d}\bigg) 
\leq  \exp \bigg( -\alpha z^2 \bigg),
$$
as soon as $\alpha  < \alpha_1$ and $d$ is big enough.
\end{proof}

We have to  find a dominant and to prove the convergence of the  integral  at zero and at infinity.

At zero, since the function $G$ is bounded 
we have to give bounds for 
$$\frac{d^{\frac{m-1}{2}} \sin^{m-1} \Big(\frac{z}{\sqrt d}\Big) \sigma^2(\frac z{\sqrt d}) } 
{\big(1-\cos^{2d}(\frac z{\sqrt d})\big)^{m/2}}.$$
Clearly, $d^{\frac{m-1}{2}} \sin^{m-1}(z/\sqrt{d})\leq z^{m-1}$. 
Besides,
$$
\frac{ \sigma^2\left(\frac z{\sqrt d}\right)}{\big(1-\cos^{2d}(\frac z{\sqrt d})\big)^{\frac{m}{2}}} =
\frac{   1 - c^2_d(z )- c^{\prime2}_d(z )   } 
{  (1- c^2_d(z ))^{\frac{m}{2}+1}},
$$
where $c(z)=\mathcal{C}(z/\sqrt{d})$. 

For the denominator, using Lemma \ref{l:liste}, we have 
\begin{equation} \label{e:md} 
 1- c^2_d(z ) \geq \mathbf C (1- \exp(- 2 \alpha z^2)).
\end{equation}
We turn now to the numerator, 
let $X_d(.)$ be a formal Gaussian stationary process on the line  with covariance $c_d$. 
Hence,  
\begin{multline*}
1-c_d^2(z)   - c^{\prime2}_d(z )  = \Var\big( X_d(z)| X_d(0),X'_d(0)\big) 
\\
= 
\Var\big( X_d(z) - X_d(0) -zX'_d(0)| X_d(0),X'_d(0)\big) \\ 
\leq  \Var\big( X_d(z) - X_d(0) -zX'_d(0)\big) 
=z^4 \Var\Big(\int _0^1 (1-t )X''_d(ut)  dt\Big),
\end{multline*}
where we used the Taylor formula with the integral form of the remainder.  
The  covariance function $ \cos(z/\sqrt{d} )$ corresponds to the spectral measure 
$  \mu =  \frac 1 2  \big(\delta_{-d^{-1/2}} +  \delta_{d^{-1/2}}\big) $, see \cite{aw}. 
The spectral  measure  associated to $c_d (z)= \cos^d(z/\sqrt{d} )$ is the $d$-th convolution  of $\mu$ 
and a direct computation shows that its fourth spectral moment exists and is bounded uniformly in $d$. 
As a consequence, $\Var (X''_d(t) )$ is  bounded uniformely in $d$, yielding that 
\begin{equation}\label{e:mn}
   1-c_d^2(z)   - c^{\prime2}_d(z ) \leq \mathbf C  z^4.
\end{equation} 
Using  \eqref{e:md}  and  \eqref{e:mn}  we get the convergence at zero.
   
At infinity, define 
\begin{multline*}
     {\mathcal H}\left( \sigma^2\left(\frac z{\sqrt d}\right), 
     \mathcal{C}\left(\frac z{\sqrt d}\right), \rho\left(\frac z{\sqrt d}\right), \mathcal{D} \left(\frac z{\sqrt d}\right) \right)
     \\
 =\frac{ \sigma^2(\frac z{\sqrt d}) } {\big(1-\cos^{2d}(\frac z{\sqrt d})\big)^{m/2}} 
 G\left(\rho\bigg(\frac z{\sqrt d}\bigg), \mathcal{D}\bigg(\frac z{\sqrt d}\bigg)\right)dz.
\end{multline*} 
Multiplication of bounded  Lipchitz functions gives a Lipchitz function, thus
\begin{multline*}
    \left|{\mathcal H}\bigg( \sigma^2\bigg(\frac z{\sqrt d}\bigg), \mathcal{C}\bigg(\frac z{\sqrt d}\bigg), 
    \rho\bigg(\frac z{\sqrt d}\bigg), \mathcal{D} \bigg(\frac z{\sqrt d}\bigg) \bigg) 
    -{\mathcal H}(1,0,0,0) \right|\\ \leq \mathbf C \big(  |\sigma^2-1| + |\mathcal{C}| + |\rho| + |\mathcal{D} | \big).
\end{multline*}
The proof is achieved with  Lemma \ref{l:liste}.

\newpage 
\subsection{Positivity of the limit variance} 
\subsubsection{Hermite expansion of the number of real roots}
We introduce the Hermite polynomials $H_n(x)$ by $H_0(x)=1$, $H_1(x)=x$ and 
$H_{n+1}(x)=xH_n(x)-nH_{n-1}(x)$. 
The multi-dimensional versions are, 
for multi-indexes 
$\balpha=(\alpha_\ell)\in\N^m$ and 
$\bbeta=(\beta_{\ell,k})\in\N^{m^2}$, 
and vectors ${\mathbf y}=(y_\ell)\in\R^m$ 
and ${\mathbf y}'=(y'_{\ell,k})\in\R^{m^2}$
$$
\bH_{\balpha}(\mathbf y)=\prod^m_{\ell=1} H_{\alpha_\ell}(y_\ell),\quad  
\bbH_{\bbeta}(\mathbf y')=\prod^{m}_{\ell,k=1} 
H_{\beta_{\ell,k}}(y'_{\ell,k}).
$$ 
It is well known that the standardized Hermite polynomials 
$\{\frac1{\sqrt{n!}}H_n\}$, $\{\frac1{\sqrt{\balpha!}}\bH_{\balpha}\}$ 
and $\{\frac1{\sqrt{\bbeta!}}\overline{\bH}_{\bbeta}\}$ 
form orthonormal bases of the spaces $L^2(\R,\phi_1)$, $L^2(\R^m,\phi_m)$ and $L^2(\R^{m^2},\phi_{m^2})$ respectively. 
Here, $\phi_j$ stands for the standard Gaussian measure on $\R^j$ ($j=1,m,m^2$) 
and $\balpha!=\prod^m_{\ell=1}\alpha_\ell !$, $\bbeta!=\prod^m_{\ell,k=1}\beta_{\ell,k}!$. 
See \cite{np,pta} for a general picture of Hermite polynomials.

Before stating the Hermite expansion for the normalized number of roots of $\bY$ we need to introduce some coefficients. 
Let $f_{\bbeta}$ ($\bbeta\in\R^{m^2}$) 
be the  
coefficients in the Hermite's basis 
of the function 
$f:\R^{m^2}\to\R$ such that $f({\mathbf y'})=|\det(\mathbf y')|$. 
That is $f({\mathbf y'})=\sum_{\bbeta\in\R^{m^2}}f_{\bbeta}\overline{\bH}_{\bbeta}({\mathbf y'})$ with 
\begin{eqnarray}\label{eq:fb}
f_{\bbeta}&=&f_{(\bbeta_1,\ldots,\bbeta_m)}
=\frac1{\bbeta!}\int_{\R^{m^2}}|\det(\mathbf y')|\overline{\bH}_{\bbeta}(\mathbf y')\phi_{m^2}(\mathbf y')d\mathbf y' \notag\\
&=&\frac1{\bbeta_1!\ldots\bbeta_m!}\int_{\R^{m^2}}|\det(\mathbf y')
|\prod_{l=1}^m H_{\bbeta_l}(\mathbf y'_l)\frac{\exp{-\frac{||\mathbf y'_l||^2}{2}}}{(2\pi)^{\frac{m}2}}d\mathbf y'_l,
\end{eqnarray}
with 
$\bbeta_l=(\beta_{l 1},\ldots,\beta_{l m})$ and 
${\mathbf y}'_l=(y'_{l1},\dots,y'_{l m})$: $l=1,\dots,m$.

Parseval's Theorem entails
$||f||^2_2=\sum_{q=0}^\infty \sum_{|\bbeta|=q}f_{\bbeta}^2\bbeta!<\infty$. 
Moreover, 
since the function $f$ is even w.r.t. each column, the above coefficients are zero whenever 
$|\bbeta_l|$ is odd for at least one $l=1,\ldots,m.$

To introduce the next coefficients let us consider first the coefficients in the Hermite's basis in 
$L^2(\R,\phi_1)$
for the Dirac delta $\delta_0(x)$.  
They are $b_{2j}=\frac1{\sqrt{2\pi}}(-\frac12)^j\frac1{j!},$ 
and  zero for odd indices \cite{kl-97}. Introducing now the distribution $\prod_{j=1}^m\delta_0(y_j)$ and denoting as $b_{\balpha}$ its coefficients 
it holds 
\begin{equation}\label{eq:b}
b_{\balpha}=\frac1{[\frac{\balpha }2]!}\prod_{j=1}^m\frac1{\sqrt{2\pi}}\bigg[-\frac12\bigg]^{[\frac{\alpha_j}2]}
\end{equation}
or $b_{\balpha}=0$ if at least one index $\alpha_j$ is odd. 

Since the formulas for the covariances of Hermite polynomials 
work in a neater way when the underlying  random variables 
are standardized, we define the standardized derivative as
\begin{equation*}
 \hY_\ell'(t):=\frac{Y_\ell'(t)}{\sqrt{d}},\quad\mbox{and}\quad \bhY'(t):=(\hY_1'(t),\ldots,\hY'_m(t)),
\end{equation*}
where $Y_\ell'(t)$ denotes the spherical derivative of $Y_\ell$ at $t\in S^m$. 
{As said above, the $k$-th component of $\hY_\ell'(t)$ in a given basis 
is denoted by $\hY_{\ell k}'(t)$.}

\begin{proposition}\label{prop:expansion}
With the same notations as above,  we have, in the $L^2$ sense, that 
\begin{equation*}
\bar{N}_d:=\frac{N^{\bY}_d-2d^{m/2}}{2d^{m/4}}=\sum^\infty_{q=1}I_{q,d},
\end{equation*}
where
\begin{equation*}
I_{q,d}=\frac{d^{m/4}}{2}\int_{S^{m}}
\sum_{|\bgamma|=q}c_{\bgamma}\bH_{\balpha}(\bY(t))\bbH_{
\bbeta}(\bhY'(t))
dt,
\end{equation*}
with $\bgamma=(\balpha,\bbeta)\in\N^m\times\N^{m^2}$ 
and  $|\bgamma| = |\alpha|+|\beta| $ 
and $c_{\bgamma}=b_{\balpha}f_{\bbeta}$.
\end{proposition}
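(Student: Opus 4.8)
The plan is to begin from the Kac-Rice / co-area representation of the root count as an integral over the sphere and then expand the integrand on the Hermite basis. Formally, writing $\delta_0$ for the Dirac mass at the origin of $\R^m$, the number of zeros of the map $\bY\colon S^m\to\R^m$ is
\[
N^{\bY}_d=\int_{S^m}\delta_0(\bY(t))\,|\det \bY'(t)|\,dt,
\]
where $\bY'(t)$ is the spherical derivative. Since $\bY'(t)=\sqrt d\,\bhY'(t)$ and the determinant of an $m\times m$ matrix is homogeneous of degree $m$, we have $|\det\bY'(t)|=d^{m/2}\,|\det\bhY'(t)|$, so that
\[
N^{\bY}_d=d^{m/2}\int_{S^m}\delta_0(\bY(t))\,|\det\bhY'(t)|\,dt.
\]
To give this a rigorous meaning I would first replace $\delta_0$ by an approximate identity $\delta_\eps$, define $N^{\bY}_{d,\eps}=d^{m/2}\int_{S^m}\delta_\eps(\bY(t))\,|\det\bhY'(t)|\,dt$, and check that $N^{\bY}_{d,\eps}\to N^{\bY}_d$ in $L^2(\Omega)$ as $\eps\to0$. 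The required $L^2$ control is available because the finiteness of the second factorial moment established via Rice's formula in Subsection \ref{s:var} guarantees $N^{\bY}_d\in L^2(\Omega)$, and hence that it possesses a Wiener-chaos expansion.

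Next I would expand the two factors separately. For each fixed $t\in S^m$ the vectors $\bY(t)$ and $\bhY'(t)$ are \emph{independent} standard Gaussian vectors on $\R^m$ and $\R^{m^2}$ (as recalled in the Preliminaries and visible from the covariance \eqref{eq:c1}), so in the $L^2(\phi_m)$ and $L^2(\phi_{m^2})$ senses
\[
\delta_\eps(\bY(t))=\sum_{\balpha}b^{\eps}_{\balpha}\,\bH_{\balpha}(\bY(t)),
\qquad
|\det\bhY'(t)|=\sum_{\bbeta}f_{\bbeta}\,\bbH_{\bbeta}(\bhY'(t)),
\]
with $b^{\eps}_{\balpha}\to b_{\balpha}$ as $\eps\to0$, the $b_{\balpha}$ being the coefficients \eqref{eq:b} of $\prod_j\delta_0(y_j)$ and the $f_{\bbeta}$ the coefficients \eqref{eq:fb} of $|\det\mathbf y'|$. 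These coefficients are independent of $t$ thanks to the isometry invariance, which makes the law of $(\bY(t),\bhY'(t))$ the same at every point. By independence, each product $\bH_{\balpha}(\bY(t))\bbH_{\bbeta}(\bhY'(t))$ is a Hermite polynomial in the $(m+m^2)$-dimensional standard vector $(\bY(t),\bhY'(t))$, lying in the chaos of order $|\balpha|+|\bbeta|$. Multiplying the two series and grouping by total degree $q=|\balpha|+|\bbeta|=|\bgamma|$, with $\bgamma=(\balpha,\bbeta)$ and $c_{\bgamma}=b_{\balpha}f_{\bbeta}$, produces the chaos decomposition of $N^{\bY}_{d,\eps}$.

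It remains to identify the terms and pass to the limit. The order-zero term is the constant $d^{m/2}\kappa_m\,b_{\mathbf 0}f_{\mathbf 0}$, which is precisely the projection onto the zeroth chaos, hence equals $\E(N^{\bY}_d)=2d^{m/2}$ by the Shub-Smale expectation formula; subtracting it removes the constant, and dividing by $2d^{m/4}$ turns the prefactor $d^{m/2}$ into $d^{m/4}/2$, yielding exactly $I_{q,d}$ for each $q\ge1$. Because summands of distinct order live in orthogonal chaoses, Parseval's identity legitimizes both the interchange of the $S^m$-integral (via Fubini) with the infinite sum and the $L^2$ convergence of the series. \emph{The hard part} is the passage $\eps\to0$: since $\delta_0\notin L^2(\phi_m)$, its Hermite series does not converge in $L^2$ and one cannot expand $\delta_0$ directly. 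The argument must instead be carried out at the level of $N^{\bY}_{d,\eps}\in L^2(\Omega)$, projecting onto each fixed chaos of order $q$, showing this projection converges as $\eps\to0$ (using $b^{\eps}_{\balpha}\to b_{\balpha}$ together with a uniform-in-$\eps$ bound on the $L^2$ norms supplied by the second-moment estimate), and only afterwards summing over $q$. Controlling the limit chaos-by-chaos, rather than term-by-term in the merely formal series, is the technical core where the Kratz-Le\'on \cite{kl-97} methodology is essential.
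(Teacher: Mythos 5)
Your overall strategy coincides with the paper's: approximate $\delta_0$ by $\delta_\eps$, expand $\delta_\eps(\bY(t))$ and $|\det\bhY'(t)|$ in Hermite polynomials (using the pointwise independence of $\bY(t)$ and $\bhY'(t)$), regroup by total degree $q=|\balpha|+|\bbeta|$, identify the zeroth-order term with $\E(N^{\bY}_d)=2d^{m/2}$, and pass to the limit $\eps\to0$ using $b^\eps_{\balpha}\to b_{\balpha}$. However, there is a genuine gap at the step you yourself flag as the crux: your justification of the $L^2$ convergence $N^{\bY}_{d,\eps}\to N^{\bY}_d$ is circular. Knowing that the limit $N^{\bY}_d$ lies in $L^2(\Omega)$ (from the Rice computation of Subsection \ref{s:var}) says nothing about the approximating family: $L^2$ convergence requires a.s.\ convergence plus domination (or uniform integrability) of $\{N^{\bY}_{d,\eps}\}_{\eps>0}$, and the second factorial moment computed in Subsection \ref{s:var} concerns only the level $\mathbf{v}=0$; it does not bound the root counts $N^{\bY}_d(\mathbf{v})$ for $\mathbf{v}\neq 0$, which are what enter $N^{\bY}_{d,\eps}$ through the area formula. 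Likewise, your ``uniform-in-$\eps$ bound on the $L^2$ norms supplied by the second-moment estimate'' is asserted, not proved.

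The paper closes precisely this hole with a deterministic ingredient you never invoke: B\'ezout's theorem. It first shows (via Proposition 6.5 of \cite{aw}, the inverse function theorem, and Milnor \cite{Milnor}) that for every $\mathbf{v}\in\R^m$ the roots of $\bY=\mathbf{v}$ are a.s.\ nondegenerate, isolated, and at most $2d^m$ in number; the area formula then gives
\begin{equation*}
N^{\bY}_{d,\eps}=\frac{1}{(2\eps)^m}\int_{[-\eps,\eps]^m}N^{\bY}_d(\mathbf{v})\,d\mathbf{v}\leq 2d^m \quad a.s.,
\end{equation*}
while local constancy of $N^{\bY}_d(\cdot)$ near the regular value $0$ gives a.s.\ convergence as $\eps\to0$. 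Dominated convergence (with the constant dominant $2d^m$) yields the $L^2$ limit, after which the chaos-by-chaos passage ($b^\eps_{\balpha}\to b_{\balpha}$ and regrouping as in Estrade--Le\'on \cite{el}) is routine. Without this uniform bound, or an equivalent uniform integrability argument for the family $\{N^{\bY}_{d,\eps}\}$, your expansion is only formal and the proof does not go through.
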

\begin{remark}
Hermite polynomials' properties imply that for $q\neq q'$ 
$$\E(I_{q,d}I_{q',d})=0.$$
\end{remark}
\begin{remark}
The main difficulty in order to obtain a CLT 
relies on the bound of the variance of the tail $\sum_{q\geq Q}I_{q,d}$ 
because of the degeneracy of the covariances of $(\bY,\bhY)$ near the diagonal $\{(s,t)\in S^m\times S^m:s=t\}$. 
Besides, on the sphere finding a convenient re-scaling as in the one-dimensional case \cite{d} 
is a difficult issue.   
\end{remark}

\noindent Proposition \ref{prop:expansion} is a direct consequence of the following lemma. 
\begin{lemma}
For $\varepsilon>0$ define
\begin{equation*}
N_\eps:=\int_{S^{m}}
|\det(\bY'(t))|\,\delta_\eps(\bY(t))dt,
\end{equation*}
where 
$\delta_\eps(\mathbf y) 
:=\prod^m_{\ell=1}\frac{1}{2\eps}\indicator_{\{| y_\ell|<\eps\}}$ 
for $\mathbf{y}=(y_1,\ldots,y_m)$, 
and $\bY'$ is the spherical derivative of $\bY$. 
Then, we have the following.
\begin{enumerate}
  \item For $\mathbf{v}\in\R^m$, let $N_d^{\bY}(\mathbf{v})$ denote the number of  real roots in $S^m$ of the 
    equation $\bY(t)=\mathbf{v}$. Then, $N_d^{\bY}(\mathbf{v})$ is bounded above by $2d^m$ almost surely. 
  \item $N_\eps\to N_d^{\bY}$ almost surely 
and in the $L^2$ sense as $\eps\to0$.
\item The random variable $N_{d}^{\bY}$ admits a Hermite's 
expansion.
\end{enumerate}
\end{lemma}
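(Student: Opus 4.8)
The plan is to prove the three assertions in order, using the area formula as the bridge between the geometric count $N_d^\bY$ and the smoothed quantity $N_\eps$. For item 1, I would homogenize the level-set equation: adjoining an extra variable $u$, consider the system $Y_\ell(t)-v_\ell u^d=0$ for $\ell=1,\dots,m$ together with $\<t,t\>-u^2=0$, viewed as $m+1$ homogeneous equations of degrees $d,\dots,d,2$ in the $m+2$ variables $(t_0,\dots,t_m,u)$. Almost surely the coefficient vector avoids the proper algebraic locus on which this system is positive-dimensional, so Bézout's theorem bounds the number of isolated complex projective solutions by $2d^m$. The real solutions with $u\neq0$ normalize to $u=1$ and are in bijection with $\{t\in S^m:\bY(t)=\mathbf v\}$, since antipodal sphere points $t$ and $-t$ give the distinct projective points $[t:1]$ and $[-t:1]$; hence $N_d^\bY(\mathbf v)\le 2d^m$ almost surely.

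For item 2, I would apply the area formula to $\bY\colon S^m\to\R^m$, whose Jacobian in the chosen orthonormal bases is exactly $|\det\bY'(t)|$: for nonnegative measurable $g$,
\[
\int_{S^m}|\det\bY'(t)|\,g(\bY(t))\,dt=\int_{\R^m}g(\mathbf v)\,N_d^\bY(\mathbf v)\,d\mathbf v .
\]
Choosing $g=\delta_\eps$ gives $N_\eps=(2\eps)^{-m}\int_{[-\eps,\eps]^m}N_d^\bY(\mathbf v)\,d\mathbf v$, an average of $\mathbf v\mapsto N_d^\bY(\mathbf v)$ over a cube shrinking to the origin. Almost surely $0$ is a regular value, so at each of the finitely many roots $\bY$ is a local diffeomorphism; combining the inverse function theorem at each root with compactness of $S^m$ (away from small neighborhoods of the roots $\|\bY\|$ is bounded below) produces an $\eps_0>0$ with $N_d^\bY(\mathbf v)=N_d^\bY$ for all $\mathbf v\in[-\eps,\eps]^m$ whenever $\eps<\eps_0$. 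Thus $N_\eps=N_d^\bY$ for small $\eps$, which gives almost sure convergence. By Sard's theorem and item 1, $N_d^\bY(\mathbf v)\le 2d^m$ for almost every $\mathbf v$, so $N_\eps\le 2d^m$ surely, and dominated convergence upgrades the convergence to $L^2$.

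For item 3, I would use the independence of $\bY(t)$ and $\bhY'(t)$ (Preliminaries) to factorize the integrand of $N_\eps$ and expand both factors in their Hermite bases, $|\det(\cdot)|=\sum_\bbeta f_\bbeta\bbH_\bbeta$ and $\delta_\eps=\sum_\balpha b_\balpha^\eps\bH_\balpha$ with $b_\balpha^\eps\to b_\balpha$ as $\eps\to0$. Orthogonality of the Hermite summands and $L^2$ convergence of the partial sums justify exchanging sum and integral, giving
\[
N_\eps=\sum_{\balpha,\bbeta}b_\balpha^\eps f_\bbeta\int_{S^m}\bH_\balpha(\bY(t))\,\bbH_\bbeta(\bhY'(t))\,dt .
\]
Letting $\eps\to0$ and invoking the $L^2$ convergence of item 2, the coefficients $b_\balpha^\eps$ tend to $b_\balpha$ and $N_d^\bY$ inherits a Hermite expansion, which is precisely the series regrouped by chaos order in Proposition \ref{prop:expansion}.

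The main obstacle I expect is the $L^2$ convergence in item 2: it is what forces the deterministic uniform bound of item 1 to be established first, and passing $\eps\to0$ inside the infinite Hermite series in item 3 relies on controlling its tail uniformly in $\eps$, for which the orthogonality of the chaoses together with the finiteness $\|f\|_2^2=\sum_{q}\sum_{|\bbeta|=q}f_\bbeta^2\,\bbeta!<\infty$ recorded after \eqref{eq:fb} is essential. By contrast, the regular-value argument underlying the almost sure convergence is comparatively routine.
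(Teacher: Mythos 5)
Your proposal is correct and, for items 2 and 3, follows essentially the paper's own route: the area formula identifying $N_\eps$ with the average of $N_d^{\bY}(\mathbf{v})$ over the cube $[-\eps,\eps]^m$, local constancy of $N_d^{\bY}(\cdot)$ near the a.s.\ regular value $0$, the uniform bound from item 1 to upgrade to $L^2$, and then the Hermite expansion of $\delta_\eps$ and $|\det(\cdot)|$ with $b^\eps_{\balpha}\to b_{\balpha}$, regrouped after the $L^2$ limit. Where you genuinely diverge is item 1. The paper first uses Proposition 6.5 of \cite{aw} (a Bulinskaya-type argument) to get that, for fixed $\mathbf{v}$, almost surely every real root on $S^m$ is nondegenerate --- i.e.\ $\{Y'_1(t),\dots,Y'_m(t),t\}$ is linearly independent --- and then invokes Milnor's lemma \cite[Lemma 1, p.~275]{Milnor}, which bounds the number of \emph{nondegenerate real} solutions of a polynomial system by the B\'ezout number $2d^m$ of the system (the $m$ degree-$d$ equations plus the degree-$2$ sphere equation). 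You instead homogenize with the auxiliary variable $u$, pass to complex projective space, and apply classical B\'ezout after asserting that the coefficient vector a.s.\ avoids the locus where the projective variety is positive-dimensional. Your route buys independence from any nondegeneracy statement about the \emph{real} roots for the purposes of the bound, but it shifts the burden to an algebraic genericity claim that you assert rather than prove: you need that this bad locus is contained in a proper Zariski-closed subset (via upper semicontinuity of fiber dimension plus an explicit zero-dimensional example, e.g.\ $Y_\ell=t_\ell^d$) and that the KSS Gaussian law assigns it measure zero; the paper's route instead reuses machinery (the a.s.\ regular-value property) that is needed again in item 2, which you in any case assert there without proof. Two minor slips worth fixing: your displayed expansion of $N_\eps$ omits the factor $d^{m/2}$ coming from $|\det \bY'(t)|=d^{m/2}|\det \bhY'(t)|$, and the deduction ``item 1 for each fixed $\mathbf{v}$ $\Rightarrow$ a.s.\ $N_d^{\bY}(\mathbf{v})\le 2d^m$ for a.e.\ $\mathbf{v}$'' should be stated as a Fubini argument rather than attributed to Sard.
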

\begin{proof}
Since the paths of $\bY$ are smooth, Proposition 6.5 of \cite{aw} implies that 
for every $\mathbf{v} \in \R^m$ almost surely there is no point $t \in  S^m$ such that 
$\bY(t) = \mathbf{v}$ and the spherical gradient is singular. Using the local inversion theorem,  this implies that  
the roots  of $\bY = \mathbf{v}$  are isolated and by compactness they are finitely many. 
As a consequence, $N^{\bY}_d(\mathbf{v})$ is well defined and a.s. finite.
Moreover, for every $t\in\R^{m+1}$ such that $Y(t)=\mathbf{v},\, \|t\|=1$, 
we have that the set $\{Y'_1(t),\ldots, Y'_m(t),t\}$ is almost surely linearly independent in $\R^{m+1}$.
This implies that  $N^{\bY}_d(\mathbf{v})$ is uniformly bounded by the B\'ezout's number $2d^m$ concluding 1 
(see for example Milnor \cite[Lemma 1, pag. 275]{Milnor}).

By the inverse function theorem, a.s. for every regular value $\mathbf{v}\in\R^m$, $N^{\bY}_d(\cdot)$ is locally constant in a neighborhood of $\mathbf{v}$.
Furthermore, by the Area Formula (see Federer \cite{federer}, or \cite{aw} Proposition 6.1), for small $\varepsilon>0$ we have
\begin{equation}\label{eq:Nepsdef}
  N_\eps = \frac {1} {(2\eps)^m}
\int_{[-\eps,\eps]^m }N^{\bY}_d(\mathbf{v})\, d\mathbf{v},\quad a.s.
\end{equation}
Hence, 
\begin{equation} \label{e:kac}
  N^{\bY}_{d}(0)=
\lim_{\eps\to0} N_{\eps},\quad a.s.
\end{equation}
From 1. and (\ref{eq:Nepsdef}) we have $N_\eps\leq 2d^m$ a.s. Then, the convergence in \eqref{e:kac}  also happens   in $L^2$.

This convergence allows us getting a Hermite's expansion. 
We have
$$
\delta_{\eps}(\mathbf y)
=\sum_{\balpha\in\N^m} b^\eps_{\balpha}\bH_{\balpha}(\mathbf y),
$$
$$
\left|\det \left(\frac{\mathbf y'}{\sqrt 
d}\right)\right|=\sum_{\bbeta\in\N^{m^2}}f_{\bbeta}\bbH_{\bbeta}\left(\frac{\mathbf y'}{\sqrt d}\right),$$
where $b^\eps_{\balpha} $ are the Hermite  coefficients  of $\delta_\eps(\mathbf y)$     and the   $f_{\bbeta}$  have been already defined. 
Furthermore, we know  that 
$\lim_{\eps\to0}b^\eps_{\balpha}=b_{\balpha}$. 
Now, taking limit 
and regrouping terms we get as in 
Estrade and Le\'on \cite{el} that
$$
N_d=d^{m/2}\sum_{q=0}^\infty\sum_{|\balpha|+|\bbeta|=q}
b_{\balpha}f_{\bbeta}\int_{  S^{m}}
\bH_{\balpha}(\bY(t))\bbH_{\bbeta}(\bhY'(t))dt.
$$
This concludes the proof.
\end{proof}

\subsubsection{$V_\infty>0$}
To prove that $V_\infty>0$ we use 
the Hermite expansion. 
In fact,
$$
V^2_\infty=\lim_{d\to\infty}\sum^{\infty}_{q=2}\Var(I_{q,d})
\geq\lim_{d\to\infty}\Var(I_{2,d}).
$$
By Proposition \ref{prop:expansion}, we have,
\begin{equation*}
I_{2,d}
=\frac{d^{m/4}}2\sum_{|\bgamma|=2}c_{\bgamma}\int_{S^m}H_{\balpha}(\mathbf Y(t))H_{\bbeta}(\overline{\mathbf Y}'(t))dt.
\end{equation*}
The coefficients $c_{\bgamma}=b_{\balpha}f_{\bbeta}$ vanish for any odd 
$\alpha_\ell$ and $|\bbeta_{\ell}|$. 
Thus, the only possibilities to satisfy the condition $|\bgamma|=2$ 
are that either only one of the indices is $2$ and the rest vanish, or 
that $\beta_{\ell,k}=\beta_{\ell,k'}=1$ for some $k\neq k'$ and the rest vanish.
Hence, 
\begin{align*}
  I_{2,d}& =\frac{d^{m/4}}2
\int_{S^m}\Bigg[\sum^m_{\ell=1}\left( b_{2}b^{m-1}_{0}f_{(0,\ldots,0)}
H_{2}(Y_\ell(t))
+b^{m}_{0}\tilde f_{\ell 12}
H_{2}(\overline{Y}'_{\ell,1}(t))\right)\\
&+    
\sum^m_{k=2}b^{m}_{0}\tilde f_{\ell k2}
H_{2}(\overline{Y}'_{\ell,k}(t))
+
  \sum_{k\neq k'}b^{m}_{0}\tilde f_{\ell kk'1}
H_{1}(\overline{Y}'_{\ell,k}(t)) H_{1}(\overline{Y}'_{\ell,k'}(t)) 
  \Bigg]dt,
\end{align*}
where $\tilde f_{\ell k2}=f_{(0,\ldots,\beta_{\ell k},0,\ldots,0)}$, $\beta_{\ell k}=2$ 
and $\tilde f_{\ell kk'1}=f_{(0,\ldots,\beta_{\ell k},\ldots,\beta_{\ell k'}0,\ldots,0)}$, $\beta_{\ell k}=\beta_{\ell k'}=1$. 
By \eqref{eq:c1}-\eqref{eq:c2} the variables in different sums are orthogonal 
when evaluated at $s,t\in S^m$. 
Now, 
by Mehler's formula, $\E(H_2(\xi)H_2(\eta))=2(\E(\xi\eta))^2\geq 0$ 
for jointly normal variables $\xi,\eta$. 
Hence, 
bounding the sum of the variances 
by one convenient term, we have
\begin{align*}
\Var(I_{2,d})&\geq 
\Var\left(\frac{d^{m/4}}{2} b^m_0\tilde{f}_{\ell 2 2}
\int_{S^m}
H_2(\overline{Y}'_{\ell 2}(t))dt\right)\\
&=\frac{d^{m/2}}{2} (b^m_0\tilde{f}_{\ell 2 2})^2
\int_{(S^m)^2}
(\E \overline{Y}'_{\ell, 2}(s)\overline{Y}'_{\ell,2}(t))^2dsdt\\
&={(b^m_0\tilde{f}_{\ell 2 2})^2\frac{d^{m/2}}{2}}
\int_{(S^m)^2}
\bigg(\<s,t\>^{d}-(d-1)\<s,t\>^{d-2}\sqrt{1-\<s,t\>^2}\bigg)^2dsdt,
\end{align*}
where last equality is a consequence of \eqref{eq:cs}.

The integral tends to a positive limit 
as can be seen 
using Lemma \ref{lem:intACV} and the scaling $t=z/\sqrt{d}$  
as in Section \ref{s:var}. 

Finally, by \eqref{eq:b} $b_0\neq 0$. 
Besides, 
by the symmetry 
of the function $f(\cdot)=|\det(\cdot)|$ and \eqref{eq:fb}, 
$\tilde{f}_{\ell k2}=\tilde{f}_{\ell k'2}$ for all $\ell,k,k'$. 
Therefore, 
adding up \eqref{eq:fb} w.r.t. $\ell$ and $k$, we get
$$
\tilde{f}_{\ell 22}
=\frac{1}{m^2}\left(\E(|\det({\mathbf{y}'})|\|{\mathbf{y}'}\|^2_{F})-m^2\E(|\det({\mathbf{y}'})|)\right),
$$
being $\|\cdot\|_{F}$ is Frobenious' norm 
and ${\mathbf{y}'}$ an $m\times m$ standard Gaussian matrix. 
Straightforward computations using polar coordinates show that 
$\tilde{f}_{\ell 22}>0$ for all $m\geq 1$. 
This concludes the proof.\\

\bibliographystyle{amsplain}

\end{document}